\newtheorem{theorem}{Theorem}
\newtheorem{proposition}{Proposition}
\newtheorem{lemma}{Lemma}
\newtheorem{corollary}{Corollary}
\newtheorem{remark}{Remark}
\theoremstyle{definition}
\newtheorem{definition}{Definition}
\newtheorem{example}{Example}
\numberwithin{equation}{section}
\let\oldproofname=\proofname
\renewcommand{\proofname}{\bf{\oldproofname}}
\begin{document}

\title{On Kronecker's solvability theorem}

\author{Yan Pan}
\address{Department of Mathematics, Henan Institute of Science and Technology, Henan 453003, People's Republic of China}
\email{ypan@stu.hist.edu.cn}
\thanks{}

\author{Yuzhen Chen}
\address{Department of Mathematics, Henan Institute of Science and Technology, Henan 453003, People's Republic of China}
\email{chenyuzhenjg@nuaa.edu.cn}
\thanks{}

\subjclass[2010]{Primary 12E05, 12F15}
\keywords{Kronecker, D{\"o}rrie, solvable polynomials}

\begin{abstract}
Kronecker's 1856 paper contains a solvability theorem that is useful to construct unsolvable polynomial equations. We show how Kronecker's solvability theorem can be derived naturally via a polynomial complete decomposition method. This method is similar to D{\"o}rrie, but we fill a gap that appears in his proof.
\end{abstract}

\maketitle

\section{Introduction}
For a long time, people attempted to find a formula for the roots of a general quintic polynomial by using a finite combination of coefficients, radicals, and arithmetic operations, but failed. It was not until 1824 Abel gave a proof that such a formula does not exist. Subsequently, in 1830, Galois provided a proof without knowing Abel's work and gave a criterion of the solvability of the general polynomial equation. However, neither Galois nor Abel give any specific example. Kronecker in his 1856 paper~\cite{kronecker1856uber} presented the following Proposition~\ref{pro:kro}, which is easy to use to construct unsolvable examples (see~\cite{dorrie1965100}).

\begin{proposition}[Kronecker]\label{pro:kro}
If an irreducible polynomial with integer coefficients is solvable and its degree is an odd prime, then either all of its roots or only one of them is real.
\end{proposition}

Kronecker proved Proposition~\ref{pro:kro} by the following Proposition~\ref{pro:gal}. For a similar proof, see~\cite{rosen1995niels}. But~\cite{rosen1995niels} mistakenly asserted that ``Kronecker was clearly unaware of Galois' work.''

\begin{proposition}[Galois]\label{pro:gal}
If a solvable irreducible polynomial with rational coefficients is solvable and its degree is an odd prime, then the root of such a polynomial can be represented as a rational function of any two other roots.
\end{proposition}

Kronecker (in~\cite{kronecker1856uber}) explained why no one seems to have come up with a similar proposition before him---Galois' proof is incomplete so that the coefficients in Galois' rational function might contain some irrational quantities like roots of unity. Kronecker also said that he would recently publish his new and simpler method to definite that, for any solvable irreducible equation with real coefficients of odd prime degree,\footnote{Kronecker also stated another proposition where he strengthens ``integer coefficients'' in Proposition~\ref{pro:kro} to ``real coefficients.'' For this reason, Kronecker seems to consider equations with real coefficients at best.} Galois' rational function's coefficients are the rational function of the equation's coefficients. Unfortunately, we do not find it.
 
D{\"o}rrie's book~\cite[\S 25]{dorrie1965100} contains a widely known elemental proof\footnote{An earlier same proof, see Weber's \textit{Enzyklop{\"a}die der Elementarmathematik; Band 2: Algebra}. Some literature seem mistaken to suggest this proof is what Kronecker 1856 gave in~\cite{kronecker1856uber}.} of Proposition~\ref{pro:kro}. That proof may be inspired by Kronecker's formula (IV) in~\cite[p.~213]{kronecker1856uber} or Abel's proof~\cite{abel1824memoire}. The following Theorem~\ref{th:KroEdw} contains a modern version of Kronecker's formula (IV).

\begin{theorem}[\cite{edwards2014roots}, Theorem~3.1's weak version]\label{th:KroEdw}
Let $g \in \mathbb{Q}[x]$ be a solvable irreducible polynomial of odd prime degree $\mu$. We can find an irreducible cyclic polynomial $f \in \mathbb{Q}[x]$ of degree $v \mid (\mu-1)$. Let $r_1, r_2, \ldots, r_{v}$ be all the roots of $f$. We can choose a proper positive integer $\delta$ whose order mod $\mu$ is $v$, and take a $\mu$th root $w$ of $r_{1}^{\delta^{\nu-1}} r_{2}^{\delta^{\nu-2}} r_{3}^{\delta^{\nu-3}} \cdots r_{\nu}$ such that $w$ is not a $\mu$th power of $\mathbb{Q}(r_1,r_2,\ldots,r_v)$. Then, a root $x_g$ of $g$ satisfies
\[
x_g=c_0+c_1 w+c_2 w^2+\cdots+c_{\mu-1} w^{\mu-1},
\]
where $c_0,c_1,\ldots,c_{\mu-1} \in \mathbb{Q}(r_1,r_2,\ldots,r_v)$.
\end{theorem}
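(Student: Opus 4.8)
The plan is to derive the formula from the Galois theory of solvable prime-degree equations, realizing $w$ as a Lagrange resolvent of $g$ and identifying $w^{\mu}$ inside a suitable cyclic field.

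First I would recall the classical structure theorem: since $g$ is irreducible of prime degree $\mu$ and solvable, its Galois group $G=\mathrm{Gal}(K/\mathbb{Q})$ (with $K$ the splitting field) is, after labelling the roots $x_0,\dots,x_{\mu-1}$ by $\mathbb{F}_{\mu}$, conjugate to a subgroup of $\mathrm{AGL}(1,\mu)=\mathbb{F}_{\mu}\rtimes\mathbb{F}_{\mu}^{\times}$; thus $G=N\rtimes H$ with $N=\langle\tau\rangle$ normal, cyclic of order $\mu$, generated by the $\mu$-cycle $x_i\mapsto x_{i+1}$, and $H$ cyclic of order dividing $\mu-1$. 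I would then adjoin a primitive $\mu$th root of unity $\zeta_{\mu}$ to $K$: the $\mu$-Sylow subgroup $\widehat{N}$ of $\mathrm{Gal}(\widehat{K}/\mathbb{Q})$, where $\widehat{K}:=K(\zeta_{\mu})$, remains normal, and its fixed field $\widehat{L}:=\widehat{K}^{\widehat{N}}$ is the abelian (but in general non-cyclic) compositum $K^{N}\!\cdot\mathbb{Q}(\zeta_{\mu})$.

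Next I would introduce the Lagrange resolvents $w_k:=\sum_{i\in\mathbb{F}_{\mu}}\zeta_{\mu}^{-ik}x_i$ for $0\le k\le\mu-1$ and set $w:=w_1$. An element $g$ of $\mathrm{Gal}(\widehat{K}/\mathbb{Q})$ sends $\zeta_{\mu}\mapsto\zeta_{\mu}^{c(g)}$ and acts on the roots by an affine substitution with linear part $a(g)\in\mathbb{F}_{\mu}^{\times}$; a short computation then gives $g(w_k)=(\text{a root of unity})\cdot w_{c(g)a(g)^{-1}k}$, so that each ratio $w_kw^{-k}$ and the power $w^{\mu}$ are fixed by the subgroup $\ker\chi$ of $\mathrm{Gal}(\widehat{L}/\mathbb{Q})$, where $\chi(g):=c(g)a(g)^{-1}$. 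Consequently $w^{\mu}$ and all the $w_kw^{-k}$ lie in the cyclic field $L:=\widehat{L}^{\ker\chi}$, whose degree $v$ — the order of the cyclic subgroup $\mathrm{im}\,\chi\le\mathbb{F}_{\mu}^{\times}$ — divides $\mu-1$. If $\sigma$ generates $\mathrm{Gal}(L/\mathbb{Q})$ with $\chi(\sigma)=\delta$, then $\sigma(w^{\mu})\equiv(w^{\mu})^{\delta}$ modulo $\mu$th powers of $L$, and $\delta$ has order exactly $v$ mod $\mu$. The Fourier inversion $x_g=\tfrac1{\mu}\sum_k w_k=\tfrac1{\mu}\bigl(w_0+\sum_{k\ge1}(w_kw^{-k})\,w^{k}\bigr)$ now writes the chosen root as $c_0+c_1w+\cdots+c_{\mu-1}w^{\mu-1}$ with every $c_i\in L$, which is the asserted shape; moreover the radicand $w^{\mu}$ is automatically not a $\mu$th power in $L$, since otherwise the same formula would force $x_g\in L(\zeta_{\mu})\subseteq\widehat{L}$, impossible because $\widehat{L}$ is fixed by the $\mu$-cycle while $x_g$ is not.

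The remaining and, I expect, hardest point — presumably exactly where D{\"o}rrie's argument is incomplete — is to produce the auxiliary polynomial itself: to choose a primitive element $r_1$ of $L$ whose minimal polynomial $f$ is irreducible of degree $v$ (so that $L=\mathbb{Q}(r_1,\dots,r_v)$ with $r_j=\sigma^{j-1}(r_1)$), and which is moreover arranged so that the twisted norm $r_1^{\delta^{v-1}}r_2^{\delta^{v-2}}\cdots r_v$ equals $w^{\mu}$ up to a $\mu$th power of $L$; replacing $w$ by the corresponding $\mu$th root of that twisted norm then gives precisely the stated radical. Existence of such an $r_1$ is plausible because the map $\rho\mapsto\prod_j\sigma^{j-1}(\rho)^{\delta^{v-j}}$ sends $L^{\times}/(L^{\times})^{\mu}$ onto its $\delta$-eigenspace under $\sigma$ (here $\delta^{v}\equiv1\pmod\mu$ makes the bookkeeping close up), and $w^{\mu}$ already lies in that eigenspace; the genuine difficulty is to secure \emph{simultaneously} that $r_1$ generates a degree-$v$ cyclic extension and that the resulting radicand is truly not a $\mu$th power in $\mathbb{Q}(r_1,\dots,r_v)$. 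I would expect the polynomial complete-decomposition method advertised in the introduction — which keeps all these factors under control at once — to be exactly the tool that makes this simultaneous choice go through, and so to fill the gap left in D{\"o}rrie's treatment.
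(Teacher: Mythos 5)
There is a genuine gap, and you flag it yourself: your final paragraph does not prove the part of the statement that makes it Edwards's Theorem~3.1 rather than the generic resolvent expansion. (For calibration: the paper offers no proof of this theorem at all --- it is imported verbatim from \cite{edwards2014roots}; the ``complete decomposition method'' of Section~4 is used only to pass \emph{from} this theorem to~\eqref{eq:KroDor}, not to prove it, so it will not supply the missing step.) Your first two paragraphs are essentially correct and standard: the affine structure of the Galois group, the transformation law $\gamma(w_k)=\zeta^{*}w_{c(\gamma)a(\gamma)^{-1}k}$, the descent of $w^{\mu}$ and of the ratios $w_kw^{-k}$ to the cyclic field $L=\widehat{L}^{\ker\chi}$, the Fourier inversion, and the non-$\mu$th-power argument via $x_g\notin\widehat{L}$ all check out. (One small repair: $w_1$ may vanish; you must take $w=w_{k_0}$ for some $k_0\neq 0$ with $w_{k_0}\neq 0$ and use the ratios $w_kw_{k_0}^{-kk_0^{-1}\bmod\mu}\in L$, which leaves the conclusion unchanged.) What is missing is the actual construction of the auxiliary cyclic polynomial $f$ and the identification of the radicand with the twisted norm $r_1^{\delta^{v-1}}r_2^{\delta^{v-2}}\cdots r_v$; asserting that this is ``plausible'' because $w^{\mu}$ lies in the right eigenspace is not a proof.

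The gap is closable along the very lines you gesture at, and the closing argument is short enough that you should supply it. Write the twisted norm as $T(\rho)=\prod_{j=1}^{v}\sigma^{j-1}(\rho)^{\delta^{v-j}}$ and work additively in the $\mathbb{F}_{\mu}$-vector space $L^{\times}/(L^{\times})^{\mu}$. You have shown $\sigma(w^{\mu})\equiv(w^{\mu})^{\delta}$ there, and on that $\delta$-eigenspace the operator $\sum_{j}\delta^{v-j}\sigma^{j-1}$ acts as multiplication by $v\delta^{v-1}$, which is invertible mod $\mu$ since $v\mid\mu-1$; hence $w^{\mu}\equiv T(\rho_0)$ for $\rho_0=(w^{\mu})^{s}$ with $s\equiv(v\delta^{v-1})^{-1}\pmod{\mu}$. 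Since $T(\rho\lambda^{\mu})\equiv T(\rho)$, you may replace $\rho_0$ by $\rho_0\lambda^{\mu}$ for a suitable $\lambda\in L$ to obtain a primitive element $r_1$ of $L$ (a generic choice works), and you set $f$ equal to its minimal polynomial, $r_j=\sigma^{j-1}(r_1)$. Then $T(r_1)=w^{\mu}\eta^{\mu}$ with $\eta\in L$, so $T(r_1)$ is not a $\mu$th power in $L$, and --- this is the one point where care is needed, since $\zeta_{\mu}\notin L$ in general --- you must take the specific $\mu$th root $w'=w\eta$ of $T(r_1)$, so that the rewritten coefficients $c_k\eta^{-k}$ stay in $L=\mathbb{Q}(r_1,\ldots,r_v)$; an arbitrary $\mu$th root would introduce powers of $\zeta_{\mu}$ into the coefficients and break the claimed containment. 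Finally, your closing speculation misidentifies D{\"o}rrie's gap: as Section~7 of the paper explains, that gap concerns the failure of the critical field to be closed under complex conjugation, not the construction of the cyclic auxiliary equation.
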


By Theorem~\ref{th:KroEdw} and D{\"o}rrie's method, we have that all the roots of $g$ can be expressed by
\begin{equation}\label{eq:KroDor}
x_j = c_0 + c_1 {w {\mathrm{e}}^{\frac{2\pi \mathrm{i} j}{\mu}}} + c_2 {( w {\mathrm{e}}^{\frac{2\pi \mathrm{i} j}{\mu}})}^2 + \cdots + c_{\mu-1} {( w {\mathrm{e}}^{\frac{2\pi \mathrm{i} j}{\mu}})}^{\mu-1},
\end{equation}
where $1 \leq j \leq \mu$. The proof of~\eqref{eq:KroDor} is given in Appendix~\ref{app:1}. 

Formulas similar to~\eqref{eq:KroDor} play a vital role in some proofs of Abel's theorem (Theorem~\ref{th:Abel}), which is also the core of this paper. Section~\ref{sec:2} introduces some definitions and results on radical extensions. Section~\ref{sec:3} proves that Definition~\ref{def:solvable by radicals} is equivalent to the usual ones. The two theorems in Sections~\ref{sec:4} and~\ref{sec:5} can help us find a formula~\eqref{eq:Dorrie} equivalent to D{\"o}rrie's main formula but weak than~\eqref{eq:KroDor}. Then, in Section~\ref{sec:6}, we get Kronecker's solvability theorem by discussing the role of a pair of complex conjugate roots under transitive transformation. Finally, in Section~\ref{sec:7}, we point out a gap in D{\"o}rrie's proof and then give a supplement. 

In particular, we assume that the readers are very fluent in basic field theory and the fundamental theorem on symmetric polynomials. One can learn them from~\cite{nan2009field,weintraub2009galois}.

\section{Some definitions and results on radical extension}\label{sec:2}
Throughout this article, all fields contained in $\mathbb{C}$, and all polynomials are monic.
\begin{definition}\label{def:irreducible radical tower}
If $p$ is prime or $1$, a field extension $D \subseteq E$ such that $E=D(u)$, where $x^{p}-u^{p} \in D[x]$ is irreducible over $D$, we call $D \subseteq E$ satisfies condition $(u,p)$. A field extension $D_{0} \subseteq D_{k}$ is said to be an \textit{irreducible radical tower} if there is a series of intermediate fields
\[
D_0 \subseteq D_1 \subseteq D_2 \subseteq \cdots \subseteq D_{l-1} \subseteq D_l \subseteq \cdots \subseteq D_k
\]
such that $D_{j-1} \subseteq D_{j}$ satisfies $(u_{j},b_{j})$ for $1 \leq j \leq k$. We call this series satisfies $(u_{j},b_{j})_{1}^{k}$.
\end{definition}

The following Definition~\ref{def:solvable by radicals} is equivalent to~\cite[Definition~4.3.1]{weintraub2009galois}. We prove it in Section~\ref{sec:3}.

\begin{definition}\label{def:solvable by radicals}
We call $f \in D_0 [x]$ \textit{solvable by radicals} over field $D_0$ if all the roots of $f$ belong to $D_k$, where $D_0 \subseteq D_k$ is an irreducible radical tower.
\end{definition}

\begin{lemma}[Abel]\label{le:Abel}
Let $p$ be a prime and $K$ be a field. The polynomial $x^p-c \in K[x]$ is irreducible over $K$ when $c$ is not a $p$th power of $K$.
\end{lemma}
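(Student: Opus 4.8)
The plan is a proof by contradiction carried out inside $\mathbb{C}$, where $x^{p}-c$ splits. Suppose $x^{p}-c$ is reducible over $K$, say $x^{p}-c=g(x)h(x)$ with $g,h\in K[x]$ non-constant; after rescaling we may take both monic, and put $d=\deg g$, so $1\le d\le p-1$. Fix a $p$th root $\alpha\in\mathbb{C}$ of $c$. If $\alpha=0$ then $c=0=0^{p}$ would be a $p$th power of $K$, against the hypothesis, so $\alpha\neq 0$; writing $\zeta=\mathrm{e}^{2\pi\mathrm{i}/p}$, the $p$ distinct numbers $\alpha,\zeta\alpha,\dots,\zeta^{p-1}\alpha$ are exactly the roots of $x^{p}-c$ in $\mathbb{C}$.

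Since $\mathbb{C}[x]$ is a UFD and $g$ is a monic degree-$d$ divisor of $x^{p}-c$, it is a product of $d$ of these linear factors, so its constant term has the shape $g(0)=(-1)^{d}\zeta^{m}\alpha^{d}$ for some integer $m$; because $g(0)\in K$ this gives $\alpha^{d}=(-1)^{d}\zeta^{-m}g(0)$. Now I use that $p$ is prime and $0<d<p$, hence $\gcd(d,p)=1$, and B\'ezout yields integers $s,t$ with $sd+tp=1$. Therefore
\[
\alpha=\alpha^{sd+tp}=(\alpha^{d})^{s}(\alpha^{p})^{t}=\bigl((-1)^{d}\zeta^{-m}g(0)\bigr)^{s}c^{t}=\zeta^{-ms}\beta ,
\]
where $\beta=(-1)^{ds}g(0)^{s}c^{t}\in K$ (all quantities here are nonzero, so negative exponents cause no trouble). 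Raising to the $p$th power now eliminates the root of unity: $c=\alpha^{p}=(\zeta^{-ms}\beta)^{p}=(\zeta^{p})^{-ms}\beta^{p}=\beta^{p}$, so $c$ is a $p$th power of $K$, a contradiction. Hence $x^{p}-c$ is irreducible over $K$.

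The only genuinely delicate point, and the one an over-hasty write-up tends to gloss over, is that we are never entitled to assume $\zeta\in K$: the putative root $\alpha$ need not lie in $K$ either, and the mechanism that rescues the argument is exactly that the stray factor $\zeta^{-ms}$ dies upon taking $p$th powers. It is also worth recording explicitly the degenerate case $\alpha=0$ (a first, easy use of the hypothesis), and noting that the argument is uniform in the prime $p$, requiring no separate treatment of $p=2$.
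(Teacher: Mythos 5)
Your proof is correct and complete: the argument via the constant term of a monic factor (a root of unity times $\alpha^{d}$), B\'ezout with $\gcd(d,p)=1$, and raising to the $p$th power to eliminate the root of unity is exactly the classical proof of Abel's lemma. The paper itself does not prove this statement but only cites D{\"o}rrie~\cite[p.~118]{dorrie1965100}, where essentially this same argument appears, so your write-up supplies in full what the paper outsources.
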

\begin{proof}
See~\cite[p.~118, Abel's lemma]{dorrie1965100}.
\end{proof}

\begin{corollary}\label{co:Abel}
Let $p$ be a prime. Assume that $x^p-c\in K[x]$, where $K$ is a field. If ${\mathrm{e}}^{{2 \pi \mathrm{i}}/{p}} \in K$ and $u$ is a root of $x^p-c$, then $K \subseteq K(u)$ satisfies $(1,1)$ or $(u,p)$.
\end{corollary}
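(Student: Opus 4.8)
The plan is to split into two cases according to whether the constant $c$ is a $p$th power in $K$, letting Abel's Lemma~\ref{le:Abel} do all the work in one case. Since $u$ is a root of $x^p-c$, we have $u^p=c\in K$, so $x^p-u^p$ and $x^p-c$ are literally the same polynomial over $K$; thus the only question is whether this polynomial is irreducible over $K$ or whether $u$ is already in $K$.

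If $c$ is \emph{not} a $p$th power of $K$, I would invoke Lemma~\ref{le:Abel} directly: it yields that $x^p-c=x^p-u^p\in K[x]$ is irreducible over $K$. Combined with $K(u)=K(u)$ and $u^p\in K$, this is precisely condition $(u,p)$ of Definition~\ref{def:irreducible radical tower}, so this case is immediate.

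If $c$ \emph{is} a $p$th power of $K$, write $c=a^p$ with $a\in K$. I would first dispose of $c=0$ (then $u=0\in K$), and otherwise take $a\neq 0$ and observe that $(ua^{-1})^p=u^pa^{-p}=1$, so $ua^{-1}$ is a $p$th root of unity. Since the $p$th roots of unity form a cyclic group generated by ${\mathrm{e}}^{2\pi\mathrm{i}/p}\in K$, it follows that $ua^{-1}\in K$ and hence $u=a\cdot(ua^{-1})\in K$. Then $K(u)=K$, and I would note that such a trivial extension satisfies $(1,1)$ because the degree-$1$ polynomial $x-u\in K[x]$ is (vacuously) irreducible over $K$.

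I do not expect a genuine obstacle here: the only substantive ingredient is Abel's Lemma, and the single point deserving a moment's attention is unwinding what the degenerate label $(1,1)$ means in Definition~\ref{def:irreducible radical tower} and remembering to handle $c=0$ separately.
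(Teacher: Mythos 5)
Your proposal is correct and follows essentially the same route as the paper: both arguments reduce to Abel's Lemma~\ref{le:Abel}, splitting into the case where $x^p-c$ is irreducible (giving condition $(u,p)$ directly) and the case where $c$ is a $p$th power $\beta^p$ in $K$, in which the hypothesis ${\mathrm{e}}^{2\pi\mathrm{i}/p}\in K$ forces $u\in K$ and hence the trivial condition $(1,1)$. Your separate treatment of $c=0$ is a harmless extra precaution that the paper's factorization $x^p-c=\prod_{k}(x-\beta{\mathrm{e}}^{2\pi\mathrm{i}k/p})$ absorbs automatically.
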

\begin{proof}
If $x^p-c \in K[x]$ is reducible over $K$, by Lemma~\ref{co:Abel}, then we can find $\beta \in K$ such that ${\beta}^p=c$. So we have $x^p-c = (x-\beta) (x-\beta {\mathrm{e}}^{{2 \pi \mathrm{i}}/{p}}) \cdots (x-\beta {\mathrm{e}}^{{2 \pi \mathrm{i} (p-1)}/{p}})$. Since $u$ is a root of $x^p-c$, it follows that $K \subseteq K(u)$ satisfies condition $(1,1)$. If $x^p-c \in K[x]$ is irreducible over $K$, by Definition~\ref{def:irreducible radical tower}, then we have $K \subseteq K(u)$ satisfies condition $(u,p)$.
\end{proof}

\begin{remark}\label{re:Abel}
Let $p \geq 2$ be an integer. Assume that $K$ is a field such that for any prime $q \leq p$, ${\mathrm{e}}^{{2 \pi \mathrm{i}}/{q}} \in K$ holds. If $a^p \in K$, then $K \subseteq K(a)$ is an irreducible radical tower.
\end{remark}
\begin{proof}
Let $p_0=1$. We write $p=\prod_{j=1}^{k} p_j$, where $p_j$ is a prime. Then we have
\[
K(a^{p_0 p_1 \cdots p_{k-1} p_k}) \subseteq K(a^{p_0 p_1 \cdots p_{k-1}}) \subseteq \cdots \subseteq K(a^{p_0 p_1}) \subseteq K(a^{p_0}).
\]
By Corollary~\ref{co:Abel}, for each $1 \leq j \leq k$, the extension $K(a^{p_0 p_1 \cdots p_{j-1} p_j}) \subseteq K(a^{p_0 p_1 \cdots p_{j-1}})$ satisfies $(1,1)$ or $(a^{p_0 p_1 \cdots p_{j-1}},p_j)$, so that $K \subseteq K(a)$ is an irreducible radical tower.
\end{proof}

\section{An irreducible radical tower with enough roots of unity}\label{sec:3}
This section aims to use Theorem~\ref{th:Gauss} to show the equivalence of Definition~\ref{def:solvable by radicals} to the definitions of algebraic solvability in~\cite{rosen1995niels,weintraub2009galois}. Corollary~\ref{co:Gauss} will achieve this goal.

\begin{theorem}[Gauss\footnote{We thank an anonymous reviewer for pointing out that a special case of Theorem~\ref{th:Gauss}, which is all cyclotomic fields are contained in radical towers, is a very classical result from Gauss~\cite[Chapter 7, \S 359]{gauss1996disquisitiones}.}]\label{th:Gauss}
Let $q \geq 3$ be a prime and $E$ be a field. We can find an irreducible radical tower $E \subseteq K_q^E$, and it satisfies
\[
E_0 = E \subseteq E_1 \subseteq E_2 \subseteq \cdots \subseteq E_k =K_q^E \supseteq E({\mathrm{e}}^{\frac{2 \pi \mathrm{i}}{3}}, {\mathrm{e}}^{\frac{2 \pi \mathrm{i}}{5}}, \ldots, {\mathrm{e}}^{\frac{2 \pi \mathrm{i}}{q}})
\]
such that ${\mathrm{e}}^{{2 \pi \mathrm{i}}/{q_{j}}} \in E_{j-1}$, and $E_{j-1} \subseteq E_{j}$ satisfies $(c_{j}, q_{j})$ for $1 \leq j \leq k$.
\end{theorem}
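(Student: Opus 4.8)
The plan is to prove this by strong induction on the prime $q \geq 3$. For the base case $q = 3$, we need $\mathrm{e}^{2\pi\mathrm{i}/3}$, which satisfies $x^2 + x + 1 = 0$, hence lies in $E(\sqrt{-3})$; since $(\sqrt{-3})^2 = -3 \in E$, Remark~\ref{re:Abel} (or directly Corollary~\ref{co:Abel}, noting $\mathrm{e}^{2\pi\mathrm{i}/2} = -1 \in E$) shows $E \subseteq E(\sqrt{-3})$ is an irreducible radical tower of the required form, and we set $K_3^E = E(\sqrt{-3})$. For the inductive step, suppose the theorem holds for all primes $< q$. Let $E'$ be the field obtained by first adjoining $\mathrm{e}^{2\pi\mathrm{i}/3}, \ldots, \mathrm{e}^{2\pi\mathrm{i}/p}$ over all primes $p < q$ via the inductively constructed towers; concretely, chain together $E \subseteq K_3^E$, then $K_3^E \subseteq K_5^{K_3^E}$, and so on up through the largest prime below $q$, obtaining an irreducible radical tower $E \subseteq E'$ in which every step satisfies the root-of-unity hypothesis (each new prime $q_j$ used is $\leq$ the prime whose cyclotomic field we are building, and all smaller primitive roots of unity are already present from earlier blocks).

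It remains to adjoin $\zeta := \mathrm{e}^{2\pi\mathrm{i}/q}$ to $E'$ by an irreducible radical tower whose steps satisfy the hypothesis. This is the classical fact that the cyclotomic field is contained in a radical tower, and is the main obstacle. The key is Gauss's theory of periods: the extension $E'(\zeta)/E'$ is abelian (it is a subextension of $\mathbb{Q}(\zeta)/\mathbb{Q}$ base-changed to $E'$), with Galois group cyclic of order $d \mid q-1$ generated by some $\sigma$. Write $d = \prod_{i=1}^{m} \ell_i$ as a product of primes $\ell_i$; each $\ell_i$ divides $q - 1 < q$, so $\ell_i < q$ and hence $\mathrm{e}^{2\pi\mathrm{i}/\ell_i} \in E'$ by construction. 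Take the tower of fixed fields $E' = F_0 \subseteq F_1 \subseteq \cdots \subseteq F_m = E'(\zeta)$ corresponding to the chain of subgroups $\langle \sigma \rangle \supseteq \langle \sigma^{\ell_1} \rangle \supseteq \cdots \supseteq \{1\}$, so that each $F_{i-1} \subseteq F_i$ is cyclic of prime degree $\ell_i$ with $\mathrm{e}^{2\pi\mathrm{i}/\ell_i} \in F_{i-1}$.

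Finally I would invoke the standard Lagrange-resolvent argument to show each such step is radical of the required type: if $F_{i-1} \subseteq F_i$ is cyclic of prime degree $\ell$ with $\mathrm{e}^{2\pi\mathrm{i}/\ell} \in F_{i-1}$ and $\tau$ generates its Galois group, pick $\theta \in F_i \setminus F_{i-1}$ and form the resolvent $\rho = \sum_{t=0}^{\ell-1} \mathrm{e}^{-2\pi\mathrm{i} t/\ell}\,\tau^t(\theta)$; for a suitable choice (or a suitable power of $\zeta$ in place of $\mathrm{e}^{-2\pi\mathrm{i}/\ell}$) one gets $\rho \neq 0$, $\tau(\rho) = \mathrm{e}^{2\pi\mathrm{i}/\ell}\rho$, so $\rho^\ell \in F_{i-1}$ while $\rho \notin F_{i-1}$, whence $F_i = F_{i-1}(\rho)$ and, by Lemma~\ref{le:Abel}, $x^\ell - \rho^\ell$ is irreducible over $F_{i-1}$; thus $F_{i-1} \subseteq F_i$ satisfies $(\rho, \ell)$. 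Concatenating $E \subseteq E'$ with $E' = F_0 \subseteq \cdots \subseteq F_m \supseteq E'(\zeta) \supseteq E(\mathrm{e}^{2\pi\mathrm{i}/3}, \ldots, \mathrm{e}^{2\pi\mathrm{i}/q})$ and setting $K_q^E = F_m$ completes the induction. The delicate point to get right is the bookkeeping that at every step of the final concatenated tower the relevant $\mathrm{e}^{2\pi\mathrm{i}/q_j}$ is already available — this works precisely because every prime that arises (whether in building $E'$ or in splitting the degree $d$) is strictly less than $q$, so it was handled at an earlier stage of the induction.
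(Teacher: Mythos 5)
Your proposal is correct and rests on the same two pillars as the paper's own proof: induction over the primes $q\geq 3$ (so that all roots of unity of prime order less than $q$ are available before $\mathrm{e}^{2\pi\mathrm{i}/q}$ is attacked) and Lagrange resolvents for the cyclotomic extension. The difference is in how the resolvent step is organized. You first invoke the Galois correspondence to break $E'(\zeta)/E'$ into a chain of cyclic steps of prime degree $\ell_i\mid q-1$ and then run the standard Kummer argument at each step; this is clean but obliges you to verify at every step that some resolvent is nonzero (your ``for a suitable choice one gets $\rho\neq 0$'' is the standard trace argument and should be spelled out, since a single resolvent can vanish). The paper instead works with all $p-1$ resolvents $\rho(\omega^{[0]},\varepsilon_j)$ of $\omega=\mathrm{e}^{2\pi\mathrm{i}/p}$ at once: it shows each $(p-1)$th power $[\rho(\omega^{[0]},\varepsilon_j)]^{p-1}$ already lies in $K_{m(p)}^E(\mathrm{e}^{2\pi\mathrm{i}/(p-1)})$ via the product formula over the cyclic orbit, adjoins the resolvents one by one using Remark~\ref{re:Abel} to split each $(p-1)$th root extraction into prime-degree steps, and finally recovers $\omega=\frac{1}{p-1}\sum_j\rho(\omega^{[0]},\varepsilon_j)$. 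This averaging formula is what lets the paper avoid the nonvanishing issue entirely (a resolvent that happens to be zero is harmless there) and also avoid any explicit appeal to the Galois correspondence, keeping the argument at the level of symmetric functions; your route buys a more conceptual structure at the cost of that one extra verification. Your base case via $\sqrt{-3}$ is in fact slightly more careful than the paper's, which asserts directly that $E\subseteq E(\mathrm{e}^{2\pi\mathrm{i}/3})$ is an irreducible radical tower because $\mathrm{e}^{2\pi\mathrm{i}/3}$ is a root of $x^2+x+1$.
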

\begin{proof}
See Appendix~\ref{app:2}.
\end{proof}

\begin{corollary}\label{co:Gauss}
Let $F$ be a field and $f \in F [x]$. Assume that all the roots of $f$ belong to $F_n$, where
\[
F \subseteq F_1 \subseteq F_2 \subseteq \cdots \subseteq F_{c-1} \subseteq F_c \subseteq \cdots \subseteq F_n
\]
is such that for each $1 \leq j \leq n$, we have $u_{j}^{b_{j}} \in F_{j-1}$, $b_{j} \in \mathbb{N_+}$, and $F_{j} = F_{j-1} (u_{j})$. Then we can find an irreducible radical tower
\[
E_0 = F \subseteq E_1 \subseteq E_2 \subseteq \cdots \subseteq E_{k-1} \subseteq E_k \cdots \subseteq E_d \supseteq F_n
\]
such that ${\mathrm{e}}^{{2 \pi \mathrm{i}}/{q_{j}}} \in E_{j-1}$, and $E_{j-1} \subseteq E_{j}$ satisfies $(a_{j}, q_{j})$ for $1 \leq j \leq d$.
\end{corollary}
\begin{proof}
Let $q$ be the smallest prime larger than $\max_{1 \leq j \leq n}\{ b_j \}$. By Theorem~\ref{th:Gauss}, we can find an irreducible radical tower
\[
E_0 = F \subseteq E_1 \subseteq E_2 \subseteq \cdots \subseteq E_k \supseteq E({\mathrm{e}}^{\frac{2 \pi \mathrm{i}}{3}}, {\mathrm{e}}^{\frac{2 \pi \mathrm{i}}{5}}, \ldots, {\mathrm{e}}^{\frac{2 \pi \mathrm{i}}{q}}).
\]
We add $u_1, u_2, \ldots, u_n$ to $E_k$ one by one. Since ${\mathrm{e}}^{{2 \pi \mathrm{i}}/{p}} \in E_k$ holds for any prime $p \leq q$, by Remark~\ref{re:Abel} we can find an irreducible radical tower
\[
E_0 = F \subseteq E_1 \subseteq E_2 \subseteq \cdots \subseteq E_{k-1} \subseteq E_k \cdots \subseteq E_d \supseteq F_n
\]
such that ${\mathrm{e}}^{{2 \pi \mathrm{i}}/{q_{j}}} \in E_{j-1}$, and $E_{j-1} \subseteq E_{j}$ satisfies $(a_{j}, q_{j})$ for $1 \leq j \leq d$.
\end{proof}

\section{A complete decomposition theorem on polynomial of prime degree}\label{sec:4}
\begin{lemma}[\cite{dorrie1965100}, p.~123, Theorem~IV]\label{le:reducible}
Assume that $f, g \in E[x]$ are irreducible over field $E$, and $\deg(f)$ is a prime. Let $x_f$ be a root of $f$ and $x_g$ be a root of $g$. If $f$ is reducible over $E(x_g)$, then $\deg(f) \mid \deg(g)$.
\end{lemma}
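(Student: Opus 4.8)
The plan is to prove $\deg(f)\mid\deg(g)$ by counting the degree of a single field extension along two different towers. Write $p=\deg(f)$, which is prime by hypothesis, and $m=\deg(g)$; the goal is $p\mid m$. The reducibility of $f$ over $E(x_g)$ will be used only to produce an irreducible factor of $f$ of degree strictly smaller than $p$, and the primality of $p$ will then force $p\mid m$.

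First I would fix an irreducible factor $h\in E(x_g)[x]$ of $f$ over $E(x_g)$ and set $d=\deg(h)$. Since $f$ is reducible over $E(x_g)$ and $\deg(f)=p$, any such $h$ satisfies $1\le d<p$. Let $\alpha$ be a root of $h$; then $\alpha$ is a root of $f$. Because all polynomials here are monic and $f$ is irreducible over $E$, $f$ is the minimal polynomial of $\alpha$ over $E$, so $[E(\alpha):E]=p$; similarly $h$ is the minimal polynomial of $\alpha$ over $E(x_g)$, so $[E(x_g,\alpha):E(x_g)]=d$, and $[E(x_g):E]=m$ since $g$ is irreducible over $E$. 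Applying the tower law to $L=E(x_g,\alpha)$ in two ways gives $[L:E]=[L:E(x_g)]\,[E(x_g):E]=dm$ and also $[L:E]=[L:E(\alpha)]\,[E(\alpha):E]=[L:E(\alpha)]\,p$, hence $p\mid dm$. Since $p$ is prime and $1\le d<p$, we have $\gcd(p,d)=1$, so $p\mid m$, i.e.\ $\deg(f)\mid\deg(g)$. Note that the specific root $x_f$ named in the statement plays no role, as $[E(x_f):E]=p$ for every root of $f$; one may simply take $\alpha$ to be a root of the chosen factor $h$.

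I do not expect a serious obstacle here: the whole argument is a double application of multiplicativity of field degrees. The only point that needs a line of care is the very first step, namely that reducibility of $f$ over $E(x_g)$ guarantees an irreducible factor $h$ with $1\le\deg(h)<p$ (it follows because in any nontrivial factorization the factor degrees are positive and sum to $p$), and then that we are entitled to pick $\alpha$ as a root of that factor rather than as the prescribed $x_f$.
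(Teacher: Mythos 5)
Your proposal is correct and follows essentially the same route as the paper: both compute $[E(x_g,\alpha):E]$ by the tower law in two ways and use the primality of $p=\deg(f)$ together with the fact that reducibility of $f$ over $E(x_g)$ forces the relevant relative degree to be a positive integer less than $p$. The only cosmetic difference is that you pick a root $\alpha$ of a chosen irreducible factor while the paper works directly with $x_f$ (whose minimal polynomial over $E(x_g)$ is such a factor anyway), so the two arguments coincide.
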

\begin{proof}
Let $p=\deg(f)$, $q=\deg(g)$. Since $p \geq 2$ is prime and $f$ is not irreducible over $E(x_g)$, we have $p \nmid [E(x_f, x_g): E(x_g)]$. Hence, because $p$ is a prime and
\[
[E(x_g, x_f): E(x_f)] \cdot p=[E(x_f, x_g): E(x_g)] \cdot q,
\]
it follows that $p \mid q$.
\end{proof}

We use Lemma~\ref{le:reducible} frequently to determine the irreducibility of polynomials in this article. Theorem~\ref{th:complete decomposition} shows a case that the $f$ in Lemma~\ref{le:reducible} can be factored into linear factors.

\begin{theorem}\label{th:complete decomposition}
Assume that $f, g \in E[x]$ are irreducible over field $E$, $p=\deg(f)$ and $q=\deg(g)$ are both primes. Assume that all the roots of $g$ are $y_1, y_2, \ldots, y_q$ such that $E(y_1) = E(y_2) = \cdots = E(y_q)$. If $f$ is reducible over $E(y_1)$, then $p=q$, $f$ can be factored into linear factors over $E(y_1)$, and all the roots of $f$ can be expressed as
\[
x_j = \sum\limits_{t=0}^{p-1} w_t y_j^t,
\]
where $w_t \in E$, $j = 1, 2, \ldots, p$.
\end{theorem}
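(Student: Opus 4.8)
The plan is to strip the hypothesis down to its Galois-theoretic content and then produce every root of $f$ by transporting a single one of them with the Galois group. First, since $y_1$ is a root of the irreducible polynomial $g$ while $f$ is irreducible over $E$ of prime degree $p$ but reducible over $E(y_1)$, Lemma~\ref{le:reducible} gives $p \mid q$; as $q$ is also prime, $p = q$. Next, the hypothesis $E(y_1) = E(y_2) = \cdots = E(y_q)$ says that $E(y_1)$ contains \emph{every} root of $g$, so $E(y_1)$ is the splitting field of $g$ over $E$; hence $E \subseteq E(y_1)$ is Galois with $[E(y_1):E] = q = p$, and its Galois group $G$ is cyclic of prime order $p$.

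The key step is to show that $E(x_f) = E(y_1)$ for every root $x_f$ of $f$. Fix such a root; then $[E(x_f):E] = p$ because $f$ is irreducible over $E$. The intersection $E(y_1) \cap E(x_f)$ is a subfield of $E(y_1)$, and since $[E(y_1):E] = p$ is prime it equals either $E$ or $E(y_1)$. If it were $E$, then, $E(y_1)/E$ being Galois, $[E(y_1,x_f):E(x_f)] = [E(y_1):E(y_1)\cap E(x_f)] = p$; comparing this with $[E(y_1,x_f):E] = [E(x_f):E]\,[E(y_1,x_f):E(x_f)] = p^2$ and $[E(y_1):E]=p$ forces $[E(y_1,x_f):E(y_1)] = p = \deg f$, so the minimal polynomial of $x_f$ over $E(y_1)$ is $f$ itself, contradicting that $f$ is reducible over $E(y_1)$. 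Hence $E(y_1) \subseteq E(x_f)$, and equality of degrees over $E$ gives $E(x_f) = E(y_1)$. In particular every root of $f$ lies in $E(y_1)$, and since $f$ is separable it splits into $p$ distinct linear factors over $E(y_1)$, which is the second assertion.

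For the explicit formula, note that because $g$ is irreducible of degree $q = p$, the powers $1, y_1, \ldots, y_1^{p-1}$ form an $E$-basis of $E(y_1)$, so a chosen root $x_1$ of $f$ has a unique expansion $x_1 = \sum_{t=0}^{p-1} w_t y_1^t$ with $w_t \in E$. The group $G$ permutes the roots of $g$ and the roots of $f$, since both families lie in $E(y_1)$ and both polynomials have coefficients in $E$; because $x_1$ and $y_1$ each generate the whole extension $E(y_1)$ over $E$, their stabilizers in $G$ are trivial, so $G$ acts simply transitively on each root set, each of cardinality $p = |G|$. Thus for each $j$ there is a unique $\tau_j \in G$ with $\tau_j(y_1) = y_j$; setting $x_j := \tau_j(x_1)$ gives a bijection $j \mapsto x_j$ onto the roots of $f$, and applying $\tau_j$ (which fixes every $w_t \in E$) to the expansion of $x_1$ yields $x_j = \sum_{t=0}^{p-1} w_t y_j^t$.

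I expect the only genuinely delicate point to be the dichotomy in the key step: turning ``$f$ is reducible over $E(y_1)$'' into ``$E(y_1) \subseteq E(x_f)$''. This is exactly where primality of $\deg f$ (so $E(y_1) \cap E(x_f)$ cannot be a proper intermediate field) and the normality of $E(y_1)/E$ (so $[E(y_1,x_f):E(x_f)]$ is governed by that intersection) are both essential; everything else is bookkeeping with a cyclic group of prime order acting regularly.
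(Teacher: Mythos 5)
Your proof is correct, but it takes a genuinely different route from the paper's. Both arguments open identically, extracting $p=q$ from Lemma~\ref{le:reducible}. After that the paper stays deliberately elementary, in keeping with its stated prerequisites (basic field theory and symmetric polynomials): it picks a minimal-degree monic factor $\varphi(x,y_1)$ of $f$ over $E(y_1)$, conjugates it to factors $\varphi(x,y_j)$, proves these are pairwise distinct via the resolvent $R(x)=\prod_j(x-b_{y_j})$ built from a non-rational coefficient, and then kills the case $\deg\varphi>1$ by a division-with-remainder degree count, so that $f=\prod_j\varphi(x,y_j)$ with each $\varphi(x,y_j)$ linear. You instead invoke the Galois correspondence head-on: $E(y_1)/E$ is Galois of prime degree $p$, the compositum formula $[E(y_1,x_f):E(x_f)]=[E(y_1):E(y_1)\cap E(x_f)]$ forces $E(x_f)=E(y_1)$ for every root $x_f$ (your dichotomy on the intersection is sound, and the contradiction with reducibility of $f$ over $E(y_1)$ is correctly drawn), and the simply transitive action of the cyclic group of order $p$ transports the basis expansion $x_1=\sum_t w_t y_1^t$ to all the other roots. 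Your version is shorter and more conceptual, and it makes transparent why the roots of $f$ and of $g$ are matched up in parallel orbits; what it costs is exactly the machinery (normality, the natural-irrationalities/compositum theorem, group actions) that the paper is trying to avoid in its D{\"o}rrie-style exposition. One cosmetic remark: your $x_j:=\tau_j(x_1)$ fixes a particular labelling of the roots of $f$, but since the theorem does not prescribe a labelling in advance this is harmless, and indeed the paper's own proof relabels in the same way via $x_j=x-\varphi(x,y_j)$.
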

\begin{proof}
See Appendix~\ref{app:3}.
\end{proof}

\section{An irreducible complex conjugate closed radical tower}\label{sec:5}
\begin{definition}\label{def:complex conjugate closed field}
Let $F$ be a field. If for any $t \in F$, we also have $\overline{t} \in F$, then we call $F$ a \textit{complex conjugate closed field}.
\end{definition}

\begin{theorem}\label{th:complex conjugate closed radical tower}
Let $E$ be a complex conjugate closed field. Assume that $f \in E[x]$ is irreducible over $E$ and has degree $n \geq 2$. If $f$ is solvable by radicals over $E$, then we can find an irreducible radical tower $E \subseteq K$ such that
\begin{enumerate}
\item The field $K$ is a complex conjugate closed field with ${\mathrm{e}}^{{2 \pi \mathrm{i}}/{q}} \in K$, where $q$ is a prime;
\item The polynomials $f(x),x^q-{\alpha}^q \in F[x]$ are irreducible over $K$ but reducible over $K(\alpha)$;
\item When ${\alpha} \notin \mathbb{R}$, we have ${\alpha}\overline{\alpha} \in K$.
\end{enumerate}

\end{theorem}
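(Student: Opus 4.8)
The plan is to build, from the hypothesis that $f$ is solvable by radicals, an irreducible radical tower over $E$ that is \emph{complex conjugate closed at every level} and that, one step before adjoining each radical, also adjoins the modulus of that radical; the field sitting immediately below the first step at which $f$ becomes reducible will then be the desired $K$. Concretely, since $f$ is solvable by radicals over $E$, Definition~\ref{def:solvable by radicals} supplies an irreducible radical tower whose top field contains all roots of $f$, and this tower is nontrivial because $f$ is irreducible of degree $n \geq 2$. Arguing exactly as in the proof of Remark~\ref{re:Gauss} --- first insert a Gauss tower from Theorem~\ref{th:Gauss}, then re-adjoin the original radicals one at a time using Remark~\ref{re:Abel} and Corollary~\ref{co:Abel} --- I would replace it by an irreducible radical tower $E = M_0 \subseteq M_1 \subseteq \cdots \subseteq M_s$ in which every step is a single prime radical step, $M_i = M_{i-1}(v_i)$ with $v_i^{p_i} \in M_{i-1}$, $p_i$ prime, $v_i \neq 0$, and $\mathrm{e}^{2\pi\mathrm{i}/p_i} \in M_{i-1}$ (``enough roots of unity''), with all roots of $f$ in $M_s$.

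The heart of the argument is the next construction. Put $C_0 = E$, which is complex conjugate closed by hypothesis, and build complex conjugate closed fields $C_i \supseteq M_i$ inductively. Given $C_{i-1}$ complex conjugate closed with $M_{i-1} \subseteq C_{i-1}$, set $\rho_i = v_i\overline{v_i} = |v_i|^2$, a positive real number. Since $\rho_i^{p_i} = v_i^{p_i}\,\overline{v_i^{p_i}} \in C_{i-1}$ and $\mathrm{e}^{2\pi\mathrm{i}/p_i} \in C_{i-1}$, Corollary~\ref{co:Abel} makes $C_{i-1} \subseteq C_{i-1}(\rho_i)$ a step of type $(1,1)$ or $(\rho_i, p_i)$, and $C_{i-1}(\rho_i)$ is again complex conjugate closed because $\rho_i$ is real; then $C_{i-1}(\rho_i) \subseteq C_{i-1}(\rho_i, v_i) =: C_i$ is, by Corollary~\ref{co:Abel} again, a step of type $(1,1)$ or $(v_i, p_i)$, and $C_i$ is complex conjugate closed because $\overline{v_i} = \rho_i/v_i \in C_i$. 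Concatenating all these steps produces an irreducible radical tower with enough roots of unity
\[
E = C_0 \subseteq C_0(\rho_1) \subseteq C_1 \subseteq C_1(\rho_2) \subseteq C_2 \subseteq \cdots \subseteq C_{s-1}(\rho_s) \subseteq C_s
\]
in which \emph{every} field is complex conjugate closed and $C_s \supseteq M_s$ contains all roots of $f$.

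To finish, relabel this chain as $E = G_0 \subseteq G_1 \subseteq \cdots \subseteq G_T = C_s$, so that each $G_t$ is complex conjugate closed and each step $G_{t-1} \subseteq G_t = G_{t-1}(a_t)$ is of type $(1,1)$ or $(a_t, q_t)$ with $q_t$ prime and $\mathrm{e}^{2\pi\mathrm{i}/q_t} \in G_{t-1}$, where each $a_t$ is one of the positive real numbers $\rho_i$ or one of the $v_i$. Because $f$ is irreducible over $G_0 = E$ and reducible over $G_T = C_s$ (which splits $f$ since $n \geq 2$), let $l$ be least with $f$ reducible over $G_l$, and set $K = G_{l-1}$ and $\alpha = a_l$. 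Since $f$ is reducible over $K(\alpha) = G_l$ but irreducible over $K$, we have $\alpha \notin K$, so step $l$ is not of type $(1,1)$; hence it is of type $(\alpha, q)$ for a prime $q$, meaning $\mathrm{e}^{2\pi\mathrm{i}/q} \in K$, $\alpha^q \in K$, and $x^q - \alpha^q$ is irreducible over $K$. Moreover $E \subseteq K$ is an irreducible radical tower and $K$ is complex conjugate closed. Finally, if $\alpha \notin \mathbb{R}$ then $\alpha$ is none of the real numbers $\rho_i$, so $\alpha = v_i$ for some $i$; by construction the step adjoining $v_i$ is $C_{i-1}(\rho_i) \subseteq C_{i-1}(\rho_i, v_i)$, hence $K = G_{l-1} = C_{i-1}(\rho_i)$ and $\alpha\overline{\alpha} = v_i\overline{v_i} = \rho_i \in K$, as required.

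The one genuinely delicate point --- and the reason for inserting the $\rho_i$ --- is that a complex conjugate closed field may sit atop a radical tower none of whose proper intermediate fields are complex conjugate closed, and even when the field just below the first reducibility step is complex conjugate closed it need not contain $\alpha\overline{\alpha} = |\alpha|^2$. Interleaving the real radical $\rho_i = v_i\overline{v_i}$ before adjoining $v_i$ cures both problems at once: it keeps every field of the refined tower complex conjugate closed (cf.\ Definition~\ref{def:complex conjugate closed field}), and it forces $\alpha\overline{\alpha}$ into the field one step below the adjunction of $\alpha$. The remaining ingredients --- the roots-of-unity normalization via Theorem~\ref{th:Gauss} and Remark~\ref{re:Abel}, and the identity $\overline{v_i} = \rho_i/v_i$ --- are routine.
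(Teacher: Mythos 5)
Your proposal is correct and follows essentially the same route as the paper: normalize the radical tower so that each step is a prime radical step with the needed roots of unity already present (via Theorem~\ref{th:Gauss}, Remark~\ref{re:Abel}, and Corollary~\ref{co:Abel}), then interleave the real radical $v_i\overline{v_i}$ immediately before each $v_i$ so every intermediate field is complex conjugate closed, and take $K$ to be the field just below the first step at which $f$ becomes reducible. This is exactly the paper's construction of the tower $F_{2j-1}=F_{2j-2}(c_j\overline{c_j})$, $F_{2j}=F_{2j-1}(c_j)$, including the final observation that a non-real $\alpha$ must be one of the $v_i$ and hence $\alpha\overline{\alpha}$ lies one step below.
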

\begin{proof}
By Corollary~\ref{co:Gauss}, if $f \in E[x]$ is solvable by radicals over $E$, we can find
\[
H_0=E \subseteq H_1 \subseteq H_2 \subseteq \cdots \subseteq H_k \cdots \subseteq H_d
\]
such that ${\mathrm{e}}^{{2 \pi \mathrm{i}}/{q_{j}}} \in H_{j-1}$, and $H_{j-1} \subseteq H_{j}$ satisfies $(c_{j}, q_{j})$ for $1 \leq j \leq d$. 

Then we consider the following field tower
\begin{equation}\label{eq:5.1}
F_0=E \subseteq F_1 \subseteq F_2 \subseteq \cdots \subseteq F_{2d}
\end{equation}
such that $F_{2j-1}=F_{2j-2} ( c_j \overline{c_j} )$ and $F_{2j}=F_{2j-1} (c_j)$ for $1 \leq j \leq d$. By Corollary~\ref{co:Abel}, we have \eqref{eq:5.1} is an irreducible radical tower. 

Since $F_0$ is a complex conjugate closed field, $F_{2j-1}=F_0 (c_1, \overline{c_1}, \ldots, c_{j-1}, \overline{c_{j-1}}, c_j \overline{c_j})$, $F_{2j}=F_0 (c_1, \overline{c_1}, \ldots, c_{j-1}, \overline{c_{j-1}}, c_{j}, \overline{c_{j}})$, we have $F_v$ are complex conjugate closed fields for $1 \leq v \leq 2d$. 

Since $F_{2d} \supseteq E_m$ contains all the roots of $f$, we can find $v_0$ ($1 \leq v_0 \leq 2d$) that $f$ is irreducible over $F_{v_0-1}$ but reducible over $F_{v_0}$. Then, we can set
\[
K=F_{{v_0}-1}, \quad \alpha=c_{\lfloor\frac{{v_0}+1}{2}\rfloor} (\overline{c_{\lfloor \frac{{v_0}+1}{2} \rfloor}})^{\frac{1-(-1)^{v_0}}{2}}, \quad q=q_{\lfloor\frac{{v_0}+1}{2}\rfloor}.
\]
If ${\alpha} \notin \mathbb{R}$, then
${\alpha}=c_{\lfloor {({v_0}+1})/{2} \rfloor}$. By the construction of~\eqref{eq:5.1}, we have ${\alpha}\overline{\alpha} \in K$.
\end{proof}

\section{Proof of Kronecker's solvability theorem}\label{sec:6}
The following Theorem~\ref{th:Kronecker}, which contains Kronecker's solvability theorem, is a modern version of the another proposition of Kronecker. Loewy's 1923 paper~\cite{loewy1923uber} gives the general odd degree case (see~\cite[p.~255, Loewy's theorem]{jensen2004number}).

\begin{theorem}[Kronecker]\label{th:Kronecker}
Let $E$ be a complex conjugate closed field. Assume that $f \in E[x]$ is irreducible over $E$ and has prime degree $p \geq 3$. If $f$ is solvable by radicals over $E$ and has a pair of complex conjugate roots, then $f$ only has one real root.
\end{theorem}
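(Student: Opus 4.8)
The strategy is to turn complex conjugation into a permutation of the roots of $f$ after writing those roots by an explicit radical formula, and then to count the fixed points of that permutation.

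First I would check that $f$ itself has real coefficients. If $\rho,\overline{\rho}$ is a pair of complex conjugate roots of $f$ (so $\rho\notin\mathbb{R}$), then the polynomial $\overline{f}$ obtained by conjugating the coefficients of $f$ lies in $E[x]$ --- because $E$ is complex conjugate closed --- and has $\overline{\rho}$ as a root; since $f$ is irreducible over $E$ and also vanishes at $\overline{\rho}$, we get $f\mid\overline{f}$, hence $f=\overline{f}$, i.e.\ $f\in\mathbb{R}[x]$. In characteristic zero $f$ is separable, so it has $p$ distinct roots and complex conjugation permutes them; after labeling the roots I write $\overline{x_j}=x_{\sigma(j)}$ for the resulting involution $\sigma$. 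The hypothesis that $f$ has a pair of complex conjugate roots says precisely that $\sigma\neq\mathrm{id}$, and the conclusion to be proved is that $\sigma$ has exactly one fixed point (equivalently, that $f$ has exactly one real root).

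Next I would apply Theorem~\ref{th:complex conjugate closed radical tower} to produce an irreducible radical tower $E\subseteq K$ with $K$ complex conjugate closed, $\mathrm{e}^{2\pi\mathrm{i}/q}\in K$, $f$ irreducible over $K$ but reducible over $K(\alpha)$, $x^{q}-\alpha^{q}$ irreducible over $K$ with $q$ prime, and $\alpha\overline{\alpha}\in K$ whenever $\alpha\notin\mathbb{R}$. Applying Theorem~\ref{th:complete decomposition} over the field $K$ with $g=x^{q}-\alpha^{q}$ --- whose roots $\alpha\zeta^{i}$, $\zeta=\mathrm{e}^{2\pi\mathrm{i}/q}$, all generate the same field $K(\alpha)$ since $\zeta\in K$ --- forces $q=p$ and yields
\[
x_j=\sum_{t=0}^{p-1}w_t(\alpha\zeta^{j})^{t}=\sum_{t=0}^{p-1}w_t\,\zeta^{jt}\alpha^{t},\qquad w_t\in K,\quad j=0,1,\ldots,p-1,
\]
where now $\zeta=\mathrm{e}^{2\pi\mathrm{i}/p}$ is a primitive $p$th root of unity. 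Here $\{1,\alpha,\ldots,\alpha^{p-1}\}$ is a $K$-basis of $K(\alpha)$, and since $f$ is irreducible of degree $p\geq 3$ over $K$ it has no root in $K$, so at least one of $w_1,\ldots,w_{p-1}$ is nonzero.

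The heart of the proof is to expand $\overline{x_j}$ in this $K$-basis and match it term by term against $x_{\sigma(j)}=\sum_t w_t\zeta^{\sigma(j)t}\alpha^{t}$. If $\alpha\in\mathbb{R}$, then $\overline{x_j}=\sum_t\overline{w_t}\,\zeta^{-jt}\alpha^{t}$, whose coefficients $\overline{w_t}\zeta^{-jt}$ lie in $K$ because $K$ is complex conjugate closed; comparing coefficients gives $\overline{w_t}=w_t\zeta^{(\sigma(j)+j)t}$ for every $t$, so for any fixed $t\in\{1,\ldots,p-1\}$ with $w_t\neq 0$ (necessarily coprime to $p$) the quantity $\zeta^{(\sigma(j)+j)t}$, hence $\sigma(j)+j\bmod p$, is independent of $j$. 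Thus $\sigma(j)\equiv c-j\pmod p$ for a constant $c$, and since $p$ is odd the congruence $2j\equiv c$ has a unique solution, so $\sigma$ has exactly one fixed point and $f$ has exactly one real root. If instead $\alpha\notin\mathbb{R}$, I would use $\overline{\alpha}=(\alpha\overline{\alpha})/\alpha$ together with $\alpha^{p}\in K$ to re-express $\overline{x_j}$ in the basis $\{1,\alpha,\ldots,\alpha^{p-1}\}$; after the substitution $t\mapsto p-t$ this takes the shape $\overline{x_j}=\overline{w_0}+\alpha^{-p}\sum_{r=1}^{p-1}(\alpha\overline{\alpha})^{p-r}\overline{w_{p-r}}\,\zeta^{jr}\alpha^{r}$ with all coefficients again in $K$, and matching against $x_{\sigma(j)}$ now yields $\sigma(j)\equiv j+d\pmod p$ for a constant $d$. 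Then $d\equiv 0$ would make $\sigma=\mathrm{id}$, so all roots of $f$ would be real, contradicting the assumed complex conjugate pair; while $d\not\equiv 0$ would make $\sigma$ fixed-point free, so $f$ would have no real root, impossible for a real monic polynomial of odd degree. Hence the case $\alpha\notin\mathbb{R}$ cannot occur, and we are in the first case.

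I expect the main obstacle to be the bookkeeping in the case $\alpha\notin\mathbb{R}$: one must use $\alpha\overline{\alpha}\in K$ and $\alpha^{p}\in K$ to see that $\overline{\alpha}$ actually lies in $K(\alpha)$, perform the reindexing $t\mapsto p-t$ carefully, and verify that every coefficient of the rewritten $\overline{x_j}$ genuinely lies in $K$, so that uniqueness of the basis representation may legitimately be invoked. Everything else is a direct combination of Theorems~\ref{th:complex conjugate closed radical tower} and~\ref{th:complete decomposition} with the elementary fact that a real polynomial of odd degree has a real root.
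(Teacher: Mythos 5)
Your proposal is correct and follows essentially the same route as the paper: invoke Theorem~\ref{th:complex conjugate closed radical tower} and Theorem~\ref{th:complete decomposition} to write the roots as $x_j=\sum_t w_t(\alpha\zeta^j)^t$, then split on whether $\alpha\in\mathbb{R}$ and compare conjugated expressions in the $K$-basis $\{1,\alpha,\ldots,\alpha^{p-1}\}$, obtaining $\sigma(j)\equiv c-j$ in the real case and $\sigma(j)\equiv j+d$ in the nonreal case. The only cosmetic differences are that you phrase everything via the conjugation permutation $\sigma$ for all $j$ at once (rather than propagating one conjugate pair by the substitution $\alpha\mapsto\alpha\zeta^j$) and that you dispose of Case~II by a fixed-point count instead of the paper's iteration, both of which are equivalent.
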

\begin{proof}
By Theorem~\ref{th:complex conjugate closed radical tower}, there is an irreducible radical tower $E \subseteq K$, where $K$ is a complex conjugate closed field, ${\mathrm{e}}^{{2 \pi \mathrm{i}}/{q}} \in K$, $f$ is irreducible over $K$ but reducible over $K(\alpha)$. In this instance, $q$ is prime, $\alpha$ is a root of $x^q-{\alpha}^q \in K[x]$, and $x^q-{\alpha}^q$ is irreducible over $K$. Additionally, if ${\alpha} \notin \mathbb{R}$, then
${\alpha}\overline{\alpha} \in K$. Then we have
\[
K(\alpha {\mathrm{e}}^{\frac{2 \pi \mathrm{i}}{p}}) = \cdots = K(\alpha {\mathrm{e}}^{\frac{2\pi \mathrm{i}(p-1)}{p}}) = K(\alpha).
\]
By Theorem~\ref{th:complete decomposition}, all the roots of $f$ can be expressed as
\begin{equation}\label{eq:Dorrie}
x_j = \sum\limits_{t=0}^{p-1} w_t {\left( \alpha {\mathrm{e}}^{\frac{2\pi \mathrm{i} j}{p}} \right)}^{t},
\end{equation}
where $w_t \in K$, $j=1, 2, \ldots, p$. For any $k \in \mathbb{Z}$, we have $x_k = x_{k'}$, where $1 \le k' \le p$, and $k \equiv k' \bmod p$. Since $f$ has a pair of complex conjugate roots, we denote them as $x_g, x_l$. Thus, we have $x_g = \overline{x_l}$.\footnote{Since $f \in E[x]$ is irreducible over $E$ and has a pair of complex conjugate roots; it follows that $f \in \mathbb{R}[x]$. Thus $f$ has a real root $x_{k_{0}}$. D{\"o}rrie studies $x_{k_{0}}=\overline{x_{k_{0}}}$, so his proof is shorter than ours. However, D{\"o}rrie and we all indirectly use the fact~\cite[pp.~430--432]{galois1846conditions} that $\operatorname{Gal}_{E} (f)$ can be generated by two permutations $x_{j} \mapsto x_{j+1}$ and $x_{j} \mapsto x_{c j}$ for some integer $c \not\equiv 0 \bmod p$, but very hidden.} We obtain
\begin{equation}\label{eq:6.1}
\sum\limits_{t=0}^{p-1} w_t {\left( \alpha {\mathrm{e}}^{ \frac{2 \pi \mathrm{i} g}{p} } \right)}^t = \sum\limits_{t=0}^{p-1} \overline{w_t} {\left( \overline{\alpha} {\mathrm{e}}^{-\frac{2\pi \mathrm{i}l}{p}} \right)}^t.
\end{equation}
We have two cases.

CASE I. When $\alpha \in \mathbb{R}$, then by~\eqref{eq:6.1} we have
\begin{equation}\label{eq:6.3}
\sum\limits_{t=0}^{p-1} \left( w_t {\mathrm{e}}^{\frac{2 \pi \mathrm{i} g t}{p}} \right) {\alpha}^t = \sum\limits_{t=0}^{p-1} \left( \overline{w_t} {\mathrm{e}}^{-\frac{2\pi \mathrm{i} l t}{p}} \right) {\alpha}^t.
\end{equation}
Since $x^p-{\alpha}^p \in K[x]$ is irreducible over $K$, we have that $\{ 1, {\alpha}, \ldots, {\alpha}^{p-1} \}$ is a basis for $E({\alpha})$ over $E$. Since ${\mathrm{e}}^{{2 \pi \mathrm{i}}/{p}}, w_t, \overline{w_t} \in K$, we can change the $\alpha$ in~\eqref{eq:6.3} to $\alpha {\mathrm{e}}^{{2 \pi \mathrm{i} j}/{p}}$ for each $1 \leq j \leq p$. It follows that
\begin{equation}\label{eq:6.4}
x_{g+j} = \overline{x_{l-j}}.
\end{equation}
When $j \equiv ({l-g})/{2} \bmod p$, then by~\eqref{eq:6.4} we have $x_{({g+l})/{2}} = \overline{x_{({g+l})/{2}}}$. Hence $f$ has one real root. If $x_{l-j_0}$ is a real root, by~\eqref{eq:6.4}, then we have $x_{g+j_0} = x_{l-j_0}$. Because $f$ does not have repeated roots, we have $x_k = x_d$ if and only if $k \equiv d \mod p$. Thus $g+j_0 \equiv l-j_0 \bmod p$. Then $j_0 \equiv ({l-g})/{2} \bmod p$. It follows that $f$ has exactly one real root.

CASE II. When $\alpha \notin \mathbb{R}$, let $\beta =\alpha \overline{\alpha}$; then $\beta \in K$. By~\eqref{eq:6.1}, we have
\begin{equation}\label{eq:6.5}
\sum\limits_{t=0}^{p-1} \left( w_t {\mathrm{e}}^{ \frac{2 \pi \mathrm{i} g t}{p} } \right) {\alpha}^t = \sum\limits_{t=0}^{p-1} \left( \overline{w_t} {\mathrm{e}}^{-\frac{2 \pi \mathrm{i} l t}{p}} {\beta}^t \right) \frac{1}{{\alpha}^t}.
\end{equation}
Since $x^p-{\alpha}^p \in K[x]$ is irreducible over $K$, we have that $\{ 1, {\alpha}, \ldots, {\alpha}^{p-1} \}$ is a basis for $E({\alpha})$ over $E$. Since ${\mathrm{e}}^{{2\pi \mathrm{i}}/{p}}, w_t, \overline{w_t}, \beta \in K$, we can change the $\alpha$ in~\eqref{eq:6.5} to $\alpha {\mathrm{e}}^{{2 \pi \mathrm{i}j}/{p}}$ for each $1 \leq j \leq p$; it follows that
\begin{equation}\label{eq:6.6}
x_{g+j}=\overline{x_{l+j}}.
\end{equation}
For any $1 \leq j \leq p$, we have, by~\eqref{eq:6.6}, that
\[
\overline{x_{l+j}}={x_{l+(g-l)+j}} = \overline{x_{l+2(g-l)+j}} = \cdots = x_{l+p(g-l)+j} = x_{l+j}.
\]
Thus $f$ only has real roots, but this contradicts our premise.
\end{proof}

\begin{theorem}[Abel]\label{th:Abel}
The general quintic polynomial is not solvable by radicals.
\end{theorem}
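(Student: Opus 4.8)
The plan is to obtain Abel's theorem as a direct consequence of Kronecker's solvability theorem (Theorem~\ref{th:Kronecker}) by exhibiting a single quintic with rational coefficients that is not solvable by radicals. Indeed, any universal radical formula expressing the roots of the general quintic in terms of its coefficients would specialize to a radical expression for the roots of every numerical quintic, so it suffices to produce one rational quintic whose roots admit no such expression.

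Concretely, I would take the Eisenstein polynomial $f(x) = x^5 - 4x + 2 \in \mathbb{Q}[x]$ and verify its two relevant properties. Irreducibility over $\mathbb{Q}$ is immediate from Eisenstein's criterion at the prime $2$. For the real roots, note that $f'(x) = 5x^4 - 4$ has exactly two real zeros, so Rolle's theorem gives at most three real roots of $f$; on the other hand the signs $f(-2) = -22 < 0$, $f(-1) = 5 > 0$, $f(1) = -1 < 0$, $f(2) = 26 > 0$ produce at least three real roots by the intermediate value theorem. Hence $f$ has exactly three real roots, and its remaining two roots form a complex conjugate pair.

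The heart of the argument is then one application of Theorem~\ref{th:Kronecker} with $E = \mathbb{Q}$: the field $\mathbb{Q}$ is complex conjugate closed because $\mathbb{Q} \subseteq \mathbb{R}$, and $f$ is irreducible over $\mathbb{Q}$ of prime degree $5 \geq 3$ with a pair of complex conjugate roots. If $f$ were solvable by radicals over $\mathbb{Q}$, Theorem~\ref{th:Kronecker} would force $f$ to have only one real root, contradicting the count just established. Therefore $f$ is not solvable by radicals over $\mathbb{Q}$, and so no radical formula for the roots in terms of the coefficients can be valid for the general quintic.

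The only genuinely delicate point I anticipate is the reduction indicated in the first paragraph: one must justify that a radical tower for the general quintic $x^5 + s_1 x^4 + \cdots + s_5$ over $\mathbb{Q}(s_1,\ldots,s_5)$ can be specialized, under the substitution sending each $s_i$ to the corresponding coefficient, to a tower over $\mathbb{Q}$ still containing the roots of the resulting polynomial; the obstruction is that denominators occurring in the tower might vanish under a particular substitution. This is resolved by the classical valuation/place argument, or, since one is free to choose which numerical quintic to specialize to, by the standard observation that the irreducible rational quintics with exactly three real roots are plentiful enough to avoid the single fixed bad locus — and Remark~\ref{re:Gauss} then ensures that the specialized tower, even if not a priori an irreducible radical tower, still witnesses solvability by radicals, so the argument above applies. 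Under the common reading in which the statement merely asserts the existence of an unsolvable rational quintic, this final paragraph is superfluous and the proof ends with the application of Theorem~\ref{th:Kronecker}.
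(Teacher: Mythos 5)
Your proof is correct and follows essentially the same route as the paper: exhibit one specific irreducible rational quintic with exactly three real roots (hence a complex conjugate pair) and apply Theorem~\ref{th:Kronecker} with $E=\mathbb{Q}$; the paper uses D{\"o}rrie's family $x^5-ax-b$ and cites him for the verification, whereas you use $x^5-4x+2$ and check the hypotheses directly. Your closing remark on specializing a radical tower for the general quintic is a point the paper silently glosses over, but it does not change the substance of the argument.
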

\begin{proof}
Let $a,b$ be integers and satisfy $4^4 a^5 > 5^5 b^4$. Assume that $a,b$ are divisible by $p$, and $b$ is indivisible by $p^2$. D{\"o}rrie~\cite[p.~127]{dorrie1965100} proves that $x^5-a x-b \in \mathbb{Q}[x]$ is irreducible over $\mathbb{Q}$ and has a pair of complex conjugate roots and three real roots. Since $\mathbb{Q}$ is a complex conjugate closed field, by Theorem~\ref{th:Kronecker} we obtain that $x^5-a x-b$ is not solvable by radicals.
\end{proof}

\section{A supplement to D{\"o}rrie's proof}\label{sec:7}
We now briefly introduce the gap in D{\"o}rrie's proof. Assume that $f \in \mathbb{Q}[x]$ is irreducible over $\mathbb{Q}$ and has degree that is an odd prime $n$. If $f$ is \textit{algebraically soluble} (see~\cite[p.~123]{dorrie1965100}), then we can find a series of intermediate fields
\[
D_0 = \mathbb{Q} \subset D_1 \subset D_2 \subset \cdots \subset D_{l-1} \subset D_l \subset \cdots \subset D_k
\]
such that for each $1 \leq j \leq k$, we have $u_{j}^{b_{j}} \in D_{j-1}$, $D_{j} = D_{j-1} \left( u_{j} \right)$, where $b_{j}$ is a prime, $u_{j}^{b_{j}}$ is not a $b_{j}$th power of $D_{j-1}$, and a root of $f$ is in $D_k$.

We follow D{\"o}rrie's method, namely add the $n$th root of unity to $\mathbb{Q}$, and then do as in his assumption~\cite[pp.~123--124]{dorrie1965100}:
\begin{quotation}
Also, with each substituted radical of our series, which still does not allow division of $f(x)$, we will also substitute at the same time the complex conjugate radical. Though this may be superfluous, it can certainly do no harm.
\end{quotation}
Now we get a new series of intermediate fields
\begin{equation}\label{eq:7.1}
\mathbb{Q} \subseteq P_0 = P'_0 = \mathbb{Q}({\mathrm{e}}^{\frac{2 \pi \mathrm{i}}{n}}) \subseteq P_1 \subseteq P'_1 \subseteq P_2 \subseteq P'_2 \subseteq \cdots \subseteq P_k \subseteq P'_k
\end{equation}
such that for each $1 \leq j \leq k$, ${P'_j}={{P}_j} \left( \overline{u_j} \right)$ and $P_j = P'_{j-1} \left( u_j \right)$. D{\"o}rrie seems to assume that ``if $f$ is irreducible over $P_{j_0}$, then $f$ is irreducible over $P'_{j_0}$.'' However, he gives no proof. D{\"o}rrie seems not to realize that $f$ may be irreducible over $P_{j_0}$ but reducible over $P'_{j_0}$. In this case, the first intermediate field that makes $f$ reducible in~\eqref{eq:7.1} (D{\"o}rrie denotes this field by $\textfrak{K}$) may not be a complex conjugate closed field. Namely, his assertion~\cite[p.~126]{dorrie1965100} ``$\overline{K_v}$ of ${K_v}$ are also $\textfrak{K}$-numbers'' is unproven. The following is an example where $\textfrak{K}$ is not a complex conjugate closed field.

\begin{example}\label{exa:1}\label{exa:gap}
Let $\theta = {\mathrm{e}}^{{2 \pi \mathrm{i}}/{11}}$ and $f(x)=\prod_{j=1}^{5} (x-{\theta}^j-{\theta}^{-j})$. Then $f \in \mathbb{Q}[x]$ is irreducible over $\mathbb{Q}$. By Theorem~\ref{th:Gauss} and Lemma~\ref{le:reducible}, we can find an irreducible radical tower
\[
\mathbb{Q} \subset \mathbb{Q}({\mathrm{e}}^{\frac{2 \pi \mathrm{i}}{5}}) \subset \mathbb{Q}({\mathrm{e}}^{\frac{2 \pi \mathrm{i}}{5}}, {\mathrm{e}}^{\frac{2 \pi \mathrm{i}}{11}} \sqrt[11]{2}) \subset E_1 \subset E_2 \cdots \subset E_k
\]
such that all the roots of $f$ are in $E_k$, and $f$ is irreducible over $\mathbb{Q}({\mathrm{e}}^{{2 \pi \mathrm{i}}/{5}}, {\mathrm{e}}^{{2 \pi \mathrm{i}}/{11}} \sqrt[11]{2})$. By D{\"o}rrie's assumption, we add ${\mathrm{e}}^{-{2 \pi \mathrm{i}}/{11}} \sqrt[11]{2}$ to $\mathbb{Q}({\mathrm{e}}^{{2 \pi \mathrm{i}}/{5}}, {\mathrm{e}}^{{2 \pi \mathrm{i}}/{11}} \sqrt[11]{2})$. It follows that
\[
{\left( \frac{{{\mathrm{e}}^{\frac{2 \pi \mathrm{i}}{11}}}\sqrt[11]{2}}{{{\mathrm{e}}^{-\frac{2 \pi \mathrm{i}}{11}}} \sqrt[11]{2}}\right)}^6 = {\mathrm{e}}^{\frac{24 \pi \mathrm{i}}{11}} = {\mathrm{e}}^{\frac{2 \pi \mathrm{i}}{11}} = \theta.
\]
Thus $f$ can be factored into linear factors over $\mathbb{Q}({\mathrm{e}}^{{2 \pi \mathrm{i}}/{5}}, {\mathrm{e}}^{{2\pi \mathrm{i}}/{11}} \sqrt[11]{2}, {\mathrm{e}}^{-{2\pi \mathrm{i}}/{11}} \sqrt[11]{2})$. Hence, we have $\textfrak{K} = \mathbb{Q}({\mathrm{e}}^{{2\pi \mathrm{i}}/{5}}, {\mathrm{e}}^{{2 \pi \mathrm{i}}/{11}} \sqrt[11]{2})$. Unfortunately, $\mathbb{Q}({\mathrm{e}}^{{2 \pi \mathrm{i}}/{5}}, {\mathrm{e}}^{{2 \pi \mathrm{i}}/{11}} \sqrt[11]{2})$ is not a complex conjugate closed field. So D{\"o}rrie's proof needs a supplement.
\end{example}

Now let us fill D{\"o}rrie's gap. Let $E=\mathbb{Q}$. The proof of Theorem~\ref{th:complex conjugate closed radical tower} shows a correct way to ``substitute the complex conjugate radical.'' Let $\textfrak{K}=K$ and $\lambda=\alpha$ (see~\cite[p.~124]{dorrie1965100}). Then $\textfrak{K}$ is a complex conjugate closed field, and D{\"o}rrie's proof now works correctly.

\section{Acknowledgments}
We are grateful to anonymous referees for their careful reading of the manuscript and helpful suggestions and comments. We wish to thank an anonymous editor and Qingshan Zhang for their valuable help in writing and communication.

\appendix

\section{Proof of~\eqref{eq:KroDor}}
The following proof of~\eqref{eq:KroDor} also closes D{\"o}rrie's gap.
\begin{proof}[\bf Proof of~\eqref{eq:KroDor}]\label{app:1}
Since $f$ is an irreducible cyclic polynomial with coefficients in $\mathbb{Q} \subseteq \mathbb{R}$, we have $\mathbb{Q}(r_1)=\mathbb{Q}{(\overline{r_1}})=\mathbb{Q}(r_1,r_2,\ldots,r_v)$. Then $x^{\mu} - w^{\mu} \in \mathbb{Q}(r_1)[x]$, and $\mathbb{Q}({\mathrm{e}}^{{2 \pi \mathrm{i}}/{\mu}}, r_1)$ is a complex conjugate closed field.

By Lemma~\ref{le:Abel}, we have $x^{\mu} - w^{\mu} \in \mathbb{Q}(r_1)[x]$ is irreducible over $\mathbb{Q}(r_1)$. Let $u(x)$ be the minimal polynomial of ${\mathrm{e}}^{{2 \pi \mathrm{i}}/{\mu}}$ over $\mathbb{Q}(r_1)$. Then $\mu \nmid \deg(u)$. Hence, by Lemma~\ref{le:reducible} we have $x^{\mu} - w^{\mu}$ is irreducible over $\mathbb{Q}({\mathrm{e}}^{{2 \pi \mathrm{i}}/{\mu}}, r_1)$.

Since $\deg(g) \nmid \deg(f)$, by Lemma~\ref{le:reducible} we have $g$ is irreducible over $\mathbb{Q}(r_1)$. Since $\deg(g) \nmid \deg(u)$, by Lemma~\ref{le:reducible} we have $g$ is irreducible over $\mathbb{Q}({\mathrm{e}}^{{2 \pi \mathrm{i}}/{\mu}}, r_1)$. By Theorem~\ref{th:KroEdw}, we have $g$ is reducible over $\mathbb{Q}({\mathrm{e}}^{{2 \pi \mathrm{i}}/{\mu}},r_1,w)$. 

According to the results above, by Theorem~\ref{th:KroEdw} and the proof of Theorem~\ref{th:complete decomposition}, then we have that all the roots of $g$ can be expressed by
\[
x_j = c_0 + c_1 {w {\mathrm{e}}^{\frac{2\pi \mathrm{i} j}{\mu}}} + c_2 {( w {\mathrm{e}}^{\frac{2\pi \mathrm{i} j}{\mu}})}^2 + \cdots + c_{\mu-1} {( w {\mathrm{e}}^{\frac{2\pi \mathrm{i} j}{\mu}})}^{\mu-1},
\]
where $c_0,c_1,\ldots,c_{\mu-1} \in \mathbb{Q}(r_1)$.
\end{proof}

\section{Proof of Theorem~\ref{th:Gauss}}\label{app:2}
\begin{proof}[\bf Proof of Theorem~\ref{th:Gauss}]
We use induction (on all primes $q \geq 3$) to prove this proposition.

Since ${\mathrm{e}}^{{2 \pi \mathrm{i}}/{3}}$ is a root of $x^2+x+1$, we have that $E \subseteq E({\mathrm{e}}^{{2\pi \mathrm{i}}/{3}})$ is an irreducible radical tower. Then we set $K_3^E = E({\mathrm{e}}^{{2 \pi \mathrm{i}}/{3}})$; it follows that Theorem~\ref{th:Gauss} is true for $q=3$.

We denote the largest prime less than $p$ as $m(p)$. Assume that Theorem~\ref{th:Gauss} is true for $q=m(p)$, where prime $p \geq 5$. We prove that Theorem~\ref{th:Gauss} is true for $q=p$. By Remark~\ref{re:Abel}, we get that $K_{m(p)}^E \subseteq K_{m(p)}^E ({\mathrm{e}}^{{2 \pi \mathrm{i}}/({p-1})})$ is an irreducible radical tower. Let $\tau$ be a primitive root modulo $p$, $\omega = {\mathrm{e}}^{{2 \pi \mathrm{i}}/{p}} $, ${\varepsilon}_j = {\mathrm{e}}^{{2 \pi \mathrm{i} j}/({p-1})}$, and ${\omega}^{[n]}= {\omega}^{{\tau}^n}$. We set the Lagrange resolvent
\[
\rho (\theta, {\varepsilon}_j) = {\theta}^{{\tau}^0} + {\varepsilon}_j {\theta}^{{\tau}^1} + {\varepsilon}_j^2 {\theta}^{{\tau}^2}+\cdots + {\varepsilon}_j^{p-2} {\theta}^{{\tau}^{p-2}}.
\]
Then we have
\[
\rho ({\omega}^{[n]}, {\varepsilon}_j) = {\omega}^{[n+0]} + {\varepsilon }_j {\omega}^{[n+1]} + {\varepsilon}_j^2 {\omega}^{[n+2]} + \cdots +{\varepsilon}_j^{p-2} {\omega}^{[n+p-2]}
\]
and
\[
\rho ({\omega}^{[n]}, {\varepsilon}_j) = \varepsilon_j^{-n}\rho ({\omega}^{[0]}, {\varepsilon}_j).
\]
For each $1 \leq j \leq p-1$, we have
\[
{\left[ {\rho} ({\omega}^{[0]}, {\varepsilon}_j) \right]}^{p-1} = {{\varepsilon}_j}^{\frac{(p-1)(p-2)}{2}} \prod\limits_{k=0}^{p-2} \rho ({\omega}^{[k]}, {\varepsilon}_j) \in K_{m(p)}^E ({\mathrm{e}}^{\frac{2 \pi \mathrm{i}}{p-1}}).
\]
Let $U^E_{m(p)} = K_{m(p)}^E ({\mathrm{e}}^{{2 \pi \mathrm{i}}/({p-1})})$. Then by Remark~\ref{re:Abel}, we can obtain
\[
\rho ({\omega }^{[0]}, {\varepsilon }_1), \quad \rho ({\omega }^{[0]}, {\varepsilon }_2), \quad \ldots, \quad \rho ({\omega }^{[0]}, {\varepsilon }_{p-1})
\]
by an irreducible radical tower
\[
\begin{aligned}
U^E_{m(p)} 
\subseteq U^E_{m(p)} (\rho ({\omega}^{[0]}, {\varepsilon}_1) )
&\subseteq U^E_{m(p)} (\rho ({\omega}^{[0]}, {\varepsilon}_1), \rho ({\omega}^{[0]}, {\varepsilon}_2))\\ 
&\subseteq \cdots \subseteq U^E_{m(p)} (\rho ({\omega}^{[0]}, {\varepsilon }_1), \rho ({\omega}^{[0]}, {\varepsilon}_2)), \ldots, \rho ({\omega}^{[0]}, {\varepsilon}_{p-1})).
\end{aligned}
\]
We set $K_p^E = U^E_{m(p)} ( {\rho ({\omega}^{[0]}, {\varepsilon }_{1})}, {\rho ({\omega}^{[0]}, {\varepsilon }_{2})}, \ldots, \rho ({\omega}^{[0]}, {\varepsilon}_{p-1}))$. Since we have
\[
\sum\limits_{j=1}^{p-1} {\varepsilon}_j = 0, \quad \sum\limits_{j=1}^{p-1} {\varepsilon}_j^2 = 0, \quad \ldots, \quad
\sum\limits_{j=1}^{p-1} {\varepsilon}_j^{p-2} = 0,
\]
it follows that
\[
\omega = {\omega}^{[0]} = \frac{1}{p-1} \sum\limits_{j=1}^{p-1} \rho ({\omega }^{[0]}, {\varepsilon}_j) \in K_p^E.
\]
Now we get an irreducible radical tower $K_{m(p)}^E \subseteq K_p^E$ such that $ {\mathrm{e}}^{{2\pi \mathrm{i}}/{p}} \in K_p^E$. Since $E \subseteq K_{m(p)}^E$ is an irreducible radical tower, it follows that $E \subseteq K_p^E$ is the irreducible radical tower that we want. Theorem~\ref{th:Gauss} is proved.
\end{proof}

\section{Proof of Theorem~\ref{th:Gauss}}\label{app:3}
\begin{proof}[\bf Proof of Theorem~\ref{th:complete decomposition}]
By Lemma~\ref{le:reducible}, we have $p=q$. We denote by $\varphi(x, y_1)$ the monic factor of $f$ over $E(y_1)$ such that $\deg(\varphi(x, y_1))$ is minimum. Let $\psi(x, y_1)=f(x)/{\varphi(x, y_1)}$. Since $g \in E[x]$ is irreducible over $E$ and $f \in E[x]$, we have $f(x) = \varphi(x, y_j) \psi(x,y_j)$, where $1 \leq j \leq p$, $\varphi(x, y_j), \psi(x, y_j) \in E(y_j)[x]$. Hence, since $E(y_j)=E(y_1)$, $\deg(\varphi(x, y_j))=\deg(\varphi(x, y_1))$, we have $\varphi(x, y_j) \in E(y_1)[x]$ are irreducible over $E(y_1)$. 

We next prove that $\varphi(x, y_j)$ are distinct for distinct $j$. Since $\varphi(x, y_1) \notin E[x]$, we can find a coefficient $b_{y_1}$ of $\varphi(x, y_1)$ such that $b_{y_1} \notin E$. We change the $y_1$ in $\varphi(x, y_1)$ to $y_j$, and also, this change makes $b_{y_1}$ be changed to $b_{y_j}$.\footnote{Write $b_{y_1}=\sum_{k=0}^{p-1} c_k y_1^k$ with $c_k \in E$. Then $b_{y_j}=\sum_{k=0}^{p-1} c_k y_j^k$.} Let $R(x)=\prod_{j=1}^{p}(x-b_{y_j})$. Then we have $R \in E[x]$. If $R$ is reducible over $E$, since $p$ is a prime, $[E(y_j):E(b_{y_j})][E(b_{y_j}):E]=p$, we have $[E(b_{y_j}):E]=1$; contradicts $b_{y_1} \notin E$. Thus $R$ is irreducible over $E$, namely, $b_{y_j}$ are distinct for distinct $j$. It follows that $\varphi(x, y_j)$ are distinct for distinct $j$.

Since $\varphi(x, y_j) \in E(y_1)[x]$ are irreducible over $E(y_1)$, $\varphi(x, y_j)$ are distinct for distinct $j$, we have $\left( \varphi(x, y_1), \varphi(x, y_2), \ldots, \varphi(x, y_p)\right)=1$. (Note that $\varphi(x, y_j)$ are monic.)

If $\deg(\varphi(x, y_1)) \neq 1$, then we can find two positive integers $r_1, r_2$ such that
\[
p = \deg(\varphi(x, y_1)) r_1 + r_2, \quad r_2 < \deg(\varphi(x, y_1)).
\]
Since $\varphi(x, y_j) \in E(y_1)[x]$ for $1 \leq j \leq p$, $\left( \varphi(x, y_1), \varphi(x, y_2), \ldots, \varphi(x, y_{p})\right)=1$, it follows that
\[
f(x) / \prod\limits_{j=1}^{r_1} \varphi(x, y_j) \in E(y_1)[x], \quad \deg(f(x) / \prod\limits_{j=1}^{r_1} \varphi(x, y_j)) = r_2 < \deg(\varphi(x, y_1));
\]
contradicts the definition of $\varphi(x, y_1)$. Thus $\deg(\varphi(x, y_1)) = 1$.

Now we let
\[
F(x) = \prod\limits_{j=1}^{p} \varphi(x, y_j).
\]
Since $f \in E[x]$ is irreducible over $E$, $\varphi(x, y_1)$ divides $f(x)$, $F \in E[x]$, and $\deg(F) = p$, we have $F=f$. Since $\{ 1, y_1, \ldots, {y_1}^{p-1} \}$ is a basis for $E(y_1)$ over $E$, it follows that
\[
\varphi(x, y_j) = x-\sum\limits_{t=0}^{p-1} w_t y_j^t,
\]
where $w_t \in E$, $j = 1, 2, \ldots, p$. For convenience of notation, for each $1 \leq j \leq p$, let $x_j = x-\varphi(x, y_j)$; then $x_1, x_2, \ldots, x_p$ are the all roots of $f$.
\end{proof}


\begin{thebibliography}{12}
\bibitem{abel1824memoire}
Abel, N. H. (1824). \textit{M{\'e}moire sur les {\'e}quations alg{\'e}brique: o{\`u} on d{\'e}montre l'impossiblit{\'e} de la r{\'e}solution de l'equation g{\'e}n{\'e}rale du cinqui{\`e}me d{\'e}gr{\'e}}, Librarian, Faculty of Science, University of Oslo.	
	
\bibitem{dorrie1965100}
D{\"o}rrie, H. (1965). \textit{100 Great Problems of Elementary Mathematics: Their History and Solution}. NY: Dover.

\bibitem{edwards2014roots}
Edwards, H. M. (2014). Roots of solvable polynomials of prime degree. \textit{Expo. Math.} 32(1): 79-91. \href{https://doi.org/10.1016/j.exmath.2013.09.005}{doi.org/10.1016/j.exmath.2013.09.005}

\bibitem{galois1846conditions}
Galois, E. (1846). Sur les conditions de r{\'e}solubilit{\'e} des {\'e}quations par radicaux. \textit{J. Math. Pures Appl.} 11: 417-444.

\bibitem{gauss1996disquisitiones}
Gauss, C. F. (1966). \textit{Disquisitiones arithmeticae}. London: Yale University Press.

\bibitem{jensen2004number}
Jensen, C. U. (2004). On the number of real roots of a solvable polynomial. \textit{Acta Arith.} 115(3): 255-263.

\bibitem{kronecker1856uber}
Kronecker, L. (1856). {\"U}ber die algebraisch aufl{\"o}sbaren Gleichungen (II. Abhandlung). \textit{Monatsberichte der K{\"o}niglichen Preussische Akademie des Wissenschaften zu Berlin}. 1(856): 203--215.

\bibitem{loewy1923uber}
Loewy, A. (1923). {\"U}ber algebraisch aufl{\"o}sbare Gleichungen. \textit{Scripta University Hierosolymitanarum}. 1(5): 1--12.

\bibitem{nan2009field}
Nan, J. (2009). \textit{Fields and Galois Theory}. Beijing: Science Press.

\bibitem{rosen1995niels}
Rosen, M. I. (1995). Niels hendrik abel and equations of the fifth degree. \textit{Amer. Math. Monthly}. 102(6): 495--505. \href{https://doi.org/10.1080/00029890.1995.12004609}{doi.org/10.1080/00029890.1995.12004609}

\bibitem{weintraub2009galois}
Weintraub, S. H. (2009). \textit{Galois theory}. NY: Springer.

\end{thebibliography}
\end{document}